\newcommand{\T}{\mathbb{T}}
\newcommand{\Nm}{\mathrm{Nm}}
\newcommand{\N}{\mathbf{N}}
\newcommand{\NN}{\mathbb{N}}
\newcommand{\Q}{\mathbb{Q}}
\newcommand{\Z}{\mathbb{Z}}
\newcommand{\R}{\mathbb{R}}
\newcommand{\Rbig}{R_{\mathrm{big}}}
\newcommand{\scr}[1]{\EuScript{#1}}
\newcommand{\OO}{\scr{O}}
\author{Nick Ramsey\thanks{nramsey@depaul.edu}}
\institution{DePaul University}
\title{Some symbolic dynamics in real quadratic fields with applications to inhomogeneous minima}
\begin{document}
\maketitle
\hrule
\smalltableofcontents

\theoremstyle{plain}
\newtheorem{theorem}{Theorem}
\newtheorem{lemma}{Lemma}
\newtheorem{corollary}{Corollary}
\newtheorem{proposition}{Proposition}
\theoremstyle{definition}
\newtheorem{remark}{Remark}
\newtheorem{question}{Question}
\newtheorem{myexample}{Example}

\begin{abstract}
Let $K$ be a real quadratic field. We use a symbolic coding of the action of a fundamental unit on the torus $\T_K = (K\otimes_\Q\R)/\OO_K$ to study the family of subsets $X_t\subseteq\T_K$ of norm distance $\geq t$ from the origin. As an application, we prove that inhomogeneous spectrum of $K$ contains a dense set of elements of $K$, and conclude that all isolated inhomogeneous minima lie in $K$. 
\end{abstract}

\section{Introduction}\label{sec:intro}

Let $D>1$ be a square-free positive integer and let $K = \Q(\sqrt{D})$ be the associated real quadratic field with ring of integers $\OO_K$. Let  $\N:K\longrightarrow \Q$ denote the absolute norm $\N(a) = |\Nm_{K/\Q}(a)| = |a\overline{a}|$, where $a\mapsto\overline{a}$ is Galois conjugation, and recall that the ring $\OO_K$ is called \emph{norm-Euclidean} if for all $a\in K$ there exists $q\in\OO_K$ such that $\N(a-q)<1.$ The ring of integers $\OO_K$ embeds as a lattice in the two-dimensional real vector space $V_K = K\otimes_\Q\R$, and we denote the quotient torus by $\T_K = V_K/\OO_K$.  Galois conjugation extends linearly to $V_K$, and the absolute norm extends accordingly to an indefinite quadratic form on $V_K$ that we also denote by $\N$. The norm is not $\OO_K$-invariant, but the function defined by \[M(P) = \inf_{Q\in \OO_K}\N(P-Q)\] is, and descends to a function on the torus $\T_K$ which we also denote by $M$.  The function $M$ is upper-semicontinuous (\cite{bsd1}, Theorem F).

The \emph{Euclidean minimum} of $K$ is defined by $M_1(K) = \sup_{P\in K}M(P)$. In particular, $M_1(K)<1$ implies that $\OO_K$ is norm-Euclidean, while $M_1(K)>1$ implies that it is not.  The second Euclidean minimum is defined by \[M_2(K) = \sup_{\substack{P\in K \\ M(P)<M_1(K)}}M(P)\] and $M_1(K)$ is said to be \emph{isolated} if $M_2(K)<M_1(K)$. We may proceed in this fashion producing Euclidean minima $M_i(K)$ until we find a non-isolated one. Note that upper-semicontinuity ensures that each of these suprema is actually achieved by some collection of points on the torus. The points $P$ in the above suprema are constrained to $K$, but we may remove that restriction and define the \emph{inhomogeneous minimum} of $K$ by $M_1(\overline{K}) = \sup_{P\in \T_K}M(P)$ and proceed as above to define the inhomogeneous minima $M_i(\overline{K})$. The \emph{inhomogeneous spectrum} of $K$ is simply the image $M(\T_K)$, and the \emph{Euclidean spectrum} of $K$ is its subset $M(K)$. 

The inhomogeneous minima demonstrate a variety of behavior, in some cases producing an infinite sequence of isolated minima while in others we find that $M_2(\overline{K})$ already fails to be isolated - see \cite{lemmermeyer} for an overview of results. Barnes and Swinnerton-Dyer proved in \cite{bsd2} that $M_1(K) = M_1(\overline{K})$ and conjectured that $M_1(\overline{K})$ is always isolated and rational, and that $M_2(\overline{K})$ is taken at a point with coordinates in $K$.  Numerous computations by other authors (\emph{e.g.} \cite{davenport2}, \cite{davenport4}, \cite{inkeri}, \cite{godwin55}, \cite{godwin63}, \cite{varnavides48}) suggest further that all inhomogeneous minima lie in $K$.

Much is known about these minima and spectra in higher degree when the unit group furnishes more automorphisms. Cerri showed in \cite{cerri} that if $K$ is has unit rank at least two, then $M_1(K)$ is taken at rational point, and hence rational. He showed further that if such a $K$ is not CM, then $M_1(K)$ is attained and isolated, and the Euclidean and inhomogeneous spectra coincide and consist of a sequence of rational numbers converging to $0$. Building on Cerri's work, Shapira and Wang proved in \cite{shapira-wang} that if $K$ has unit rank at least three then $M_1(K)$ is isolated and attained.

Returning to real quadratic $K$, the following is our main result. 
\begin{theorem}\label{main}
	The set of $M(\T_K)\cap K$ is dense in the inhomogeneous spectrum $M(\T_K)$. 
\end{theorem}
This theorem is proven by introducing the intermediate collection of points \[K/\OO_K \subseteq \widetilde{K}/\OO_K \subseteq \T_K\] whose coordinates belong to $K$, establishing that the minima of such points lie in $K$, and finally proving that the associated spectrum $M(\widetilde{K})$ is dense in the inhomogeneous spectrum. In the isolated case, this establishes the following extension of the conjecture of Barnes and Swinnerton-Dyer above.
\begin{corollary}\label{cor2}
	If $M_i(\overline{K})$ is isolated, then it is taken at point with coefficients in $K$ and we have $M_i(\overline{K})\in K$.
\end{corollary}
The method of proof also demonstrates that $M_1(\overline{K}) = M_1(K)$ is rational if it is isolated, but this was known already to Barnes and Swinnerton-Dyer (\cite{bsd2}, Theorem M). 

\section{The dynamical systems $X_t$}

By Dirichlet's unit theorem, we have $\OO_K^\times = \pm \varepsilon^\Z$ for some fundamental unit $\varepsilon$ of infinite order. We will later fix an embedding of $K$ into $\R$ and assume that $\varepsilon$ is chosen so that $\varepsilon>1$. Multiplication by $\varepsilon$ is absolute norm-preserving and extends by linearity to an endomorphism $\phi$ of $V_K$ that is also absolute norm-preserving. Since $\phi$ preserves the lattice $\OO_K$, it descends to an endomorphism of the torus $\T_K$ with the property that $M(\phi(P)) = M(P)$ for all $P\in \T_K$. The eigenvalues of $\phi$ are the embeddings of $\varepsilon$ into $\R$ and hence not roots of unity, so $\phi$ is an ergodic transformation of $\T_K$. This dynamical system, and a symbolic coding of it obtained from a Markov partition of the torus, is our main resource. Note that the subset $K/\OO_K$, which coincides with the set of periodic points for $\phi$, is traditionally referred to as the \emph{rational points} since they have rational $(x,y)$ coordinates (see Section \ref{coords}). 

For $t>0$, the $\phi$-invariant set $X_t = \{P\in \T_K\ |\ M(P)\geq t\}$ is closed by upper semicontinuity. We can describe $X_t$ alternatively by first noting that the open set \[\scr{U}(t) = \bigcup_{Q\in\OO_K}\{P\in V_K\ |\ \N(P-Q)<t\}\] is translation-invariant and descends to an open subset of $\T_K$, and then observing that $X_t$ is its complement. The sets $X_t$ have Lebesgue measure zero for $t>0$ since they are proper, closed, and $\phi$-invariant. It is natural to ask how the Hausdorff dimension $\dim(X_t)$ varies with $t$. That $\dim(X_t)\to 2$ as $t\to 0$ is a simple consequence of Theorem 2.3 of \cite{bm}. We prove in Corollary \ref{leftcontinuity} that $\dim(X_t)$ is left-continuous everywhere. Right-continuity remains an open question. 

We illustrate in the case $K=\Q(\sqrt{5})$. Davenport computed the Euclidean minima for this field in \cite{davenport2} and \cite{davenport4}, finding the infinite decreasing sequence of minima $M_1 = 1/4$, and for $i\geq 1$, \[M_{i+1} = \frac{F_{6i-2} + F_{6i-4}}{4(F_{6i-1}+F_{6i-3}-2)}\] where $F_k$ denotes the $k$th Fibonacci number.  Each of these minima is obtained at a finite collection of elements of $K/\OO_K$, and we have $M_i\longrightarrow t_\infty = (-1+\sqrt{5})/8\approx .1545$.  A plot of $\dim(X_t)$ in this case is given below. The zero-dimensional region necessarily covers $t>t_\infty$, since the collection of points giving rise to the Euclidean minima is countable. We prove in \cite{ramsey} that $\dim(X_t)>0$ for all $t<t_\infty$, while $\dim(X_{t_\infty})=0$.  In particular, $\dim(X_t)$ is continuous at $t_\infty$. 
\begin{center}
\includegraphics[scale=.4]{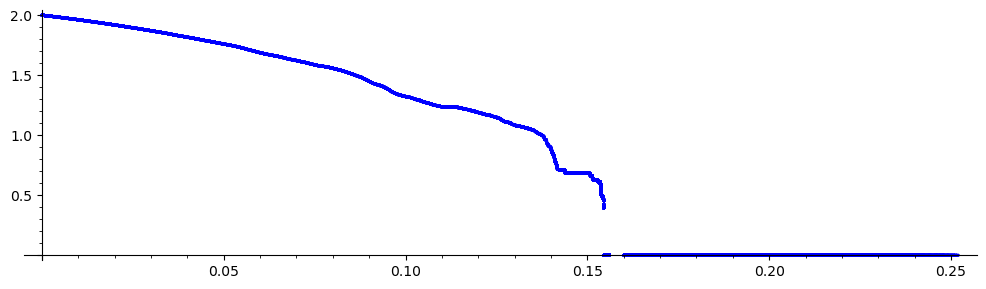}
\end{center}
The evident plateaus on this graph and its detail in Figure \ref{charondetail} have dynamical significance. The dimensions plotted here are actually upper bounds obtained by symbolically coding the torus dynamical system with a Markov partition and finding subshifts of finite type (SFTs) that contain the coding of $X_t$, as in Section \ref{sec:ub}.  As we will make precise in Proposition \ref{isosft}, a plateau will occur wherever it is possible to make such a bound tight and $X_t$ can be described directly by an SFT. The longest such plateau in the positive-dimensional region occurs around $t=.15$ (see Figure \ref{charondetail} for a detail), and we determine its endpoints and give an explicit symbolic coding of $X_t$ on this plateau in \cite{ramsey}.

\begin{figure}
\centering
\includegraphics[scale=.25]{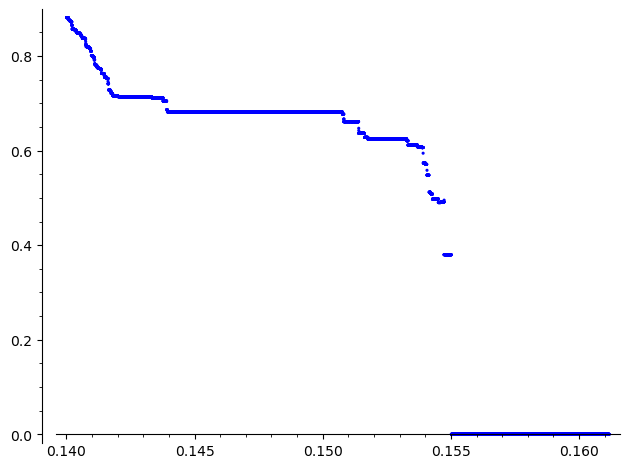}
\caption{Detail near $t=.15$}
\label{charondetail}
\end{figure}

\section{Coordinates and $K$-points}\label{coords}

Let us now take $K$ to be a subset of $\R$ by fixing an embedding, and take $\varepsilon$ to be a fundmental unit with $\varepsilon > 1$. Recall that $\{1, \alpha_K\}$ is a $\Z$-basis of $\OO_K$, where \[\alpha_K = \left\{\begin{array}{cc} \sqrt{D} & D\equiv 2,3\!\pmod{4} \\ \frac{1+\sqrt{D}}{2} & D\equiv 1\!\pmod{4}\end{array}\right.\] Coordinates with respect to this basis will be denoted $(x,y)$. The choice of embedding gives an isomoprhism
\begin{align*}
	 V_K = K\otimes_\Q\R \stackrel{\sim}{\longrightarrow} {} & \R\times\R \\
	 a\otimes 1  \longmapsto {} & (\overline{a},a)
\end{align*} 
of $\R$-algebras, and thus another coordinate system. Multiplication by $\varepsilon$ has the effect of multiplying by $\overline{\varepsilon} = \pm \varepsilon^{-1}$ in the first coordinate and $\varepsilon$ in the second coordinate. Accordingly, these are known as the \emph{stable} and \emph{unstable} coordinates and denoted $(s,u)$. Note that the absolute norm is simply $\N(s,u) = |su|$ in these coordinates, and that the coordinate transformations between $(x,y)$ and $(s,u)$ coordinates are $K$-linear. 

A point $P\in \T_K$ is called \emph{determinate} if it has a representative $Q\in V_K$ with $\N(Q) = M(P)$. It is shown in \cite{bm} (Theorem 2.6) that the set of determinate points is a meagre $F_\sigma$ set of measure zero and Hausdorff dimension 2. For a general point $P\in\T_K$, the following two lemmas help relate the value $M(P)$ to the more concrete values $\N(Q)$ for $Q\in V_K$. 

\begin{lemma}[\cite{bm}, Lemma 4.2]\label{bmlemma}
	Suppose that $P\in\T_K$ satisfies $M(P)<t$. There exists a point $Q=(s,u)\in V_K$ representing an element of the orbit of $P$ satisfying \[|s|, |u| < \sqrt{\varepsilon t}\] such that $\N(Q) = |su| <t$.
\end{lemma}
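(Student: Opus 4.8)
The plan is to build $Q$ by applying a suitable power of $\phi$ to an arbitrary good lift of $P$, exploiting that $\phi$ rescales the stable and unstable coordinates by reciprocal factors while preserving their product.

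First I would extract a starting lift. Since $M$ is defined on $\T_K$ by $M(P)=\inf_{R\in\OO_K}\N(\widetilde P-R)$ for any lift $\widetilde P\in V_K$ of $P$, the hypothesis $M(P)<t$ gives some $R\in\OO_K$ with $\N(\widetilde P-R)<t$; put $Q_0:=\widetilde P-R=(s_0,u_0)$, a lift of $P$ with $\N(Q_0)=|s_0u_0|<t$.

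Next I would pass along the orbit. In $(s,u)$ coordinates $\phi$ is the automorphism $(s,u)\mapsto(\overline\varepsilon s,\varepsilon u)$ with $|\overline\varepsilon|=\varepsilon^{-1}$, so for $n\in\Z$ the coordinates of $\phi^n(Q_0)$ have absolute values $\varepsilon^{-n}|s_0|$ and $\varepsilon^{n}|u_0|$. As $\phi$ on $V_K$ descends to $\phi$ on $\T_K$, the point $\phi^n(Q_0)$ is a lift of $\phi^n(P)$, which lies in the orbit of $P$; and $\N$ is $\phi$-invariant, so $\N(\phi^n(Q_0))=|s_0u_0|<t$. It therefore suffices to find $n\in\Z$ with $\varepsilon^{-n}|s_0|<\sqrt{\varepsilon t}$ and $\varepsilon^{n}|u_0|<\sqrt{\varepsilon t}$, for then $Q:=\phi^n(Q_0)$ does the job.

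Finally I would solve for $n$. If $s_0u_0=0$ this is immediate: one inequality is vacuous and the other is met by taking $n$ far in the appropriate direction. Otherwise, applying $\log_\varepsilon$, the two inequalities say precisely that $n$ lies in the open interval
\[
\bigl(\log_\varepsilon(|s_0|/\sqrt{\varepsilon t}),\ \log_\varepsilon(\sqrt{\varepsilon t}/|u_0|)\bigr),
\]
whose length equals $\log_\varepsilon(\varepsilon t/|s_0u_0|)$. Since $|s_0u_0|<t$, this length exceeds $\log_\varepsilon\varepsilon=1$, so the interval contains an integer, completing the construction. I do not expect a genuine obstacle here: the only delicate choice is the bound $\sqrt{\varepsilon t}$ in place of $\sqrt{t}$, which is exactly calibrated to make the interval longer than $1$ and hence forced to contain an integer — with $\sqrt{t}$ it would merely be nonempty.
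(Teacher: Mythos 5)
Your argument is correct: the paper itself gives no proof of this lemma (it is quoted from \cite{bm}, Lemma 4.2), and your proof is the standard one underlying that result — slide along the orbit with $\phi$, which scales $(s,u)$ by $(\overline\varepsilon,\varepsilon)$ while preserving $|su|$, and note that the admissible exponents form an open interval of length $\log_\varepsilon(\varepsilon t/|s_0u_0|)>1$, hence containing an integer. The degenerate case $s_0u_0=0$ and the calibration of $\sqrt{\varepsilon t}$ are handled exactly as they should be, so there is nothing to add.
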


\begin{lemma}\label{orbclosure}
	Let $P\in \T_K$. There exists $Q\in V_K$ representing an element of the orbit closure of $P$ satisfying \[\N(Q) = M(Q) = M(P)\]
\end{lemma}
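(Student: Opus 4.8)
The plan is to realize $Q$ as a limit of bounded representatives of orbit points of $P$, with Lemma \ref{bmlemma} supplying the a priori bound that makes such a limit exist. Put $t = M(P)$. For each integer $k \geq 1$ we have $M(P) = t < t + 1/k$, so Lemma \ref{bmlemma} yields a point $Q_k = (s_k,u_k) \in V_K$ representing an element of the $\phi$-orbit of $P$ with $|s_k|,|u_k| < \sqrt{\varepsilon(t+1/k)}$ and $\N(Q_k) = |s_k u_k| < t + 1/k$. Since the image of $Q_k$ in $\T_K$ lies on the orbit of $P$ and $M$ is $\phi$-invariant, that image has $M$-value equal to $M(P) = t$.

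Because all the $Q_k$ lie in the fixed compact square $[-\sqrt{\varepsilon(t+1)},\sqrt{\varepsilon(t+1)}]^2 \subset V_K$ (identifying $V_K$ with $\R^2$ via $(s,u)$-coordinates), some subsequence $Q_{k_j}$ converges to a point $Q \in V_K$; continuity of $\N$ and the bound $\N(Q_{k_j}) < t + 1/k_j$ then give $\N(Q) \leq t$. The reduction map $V_K \to \T_K$ is continuous, so the images of the $Q_{k_j}$ converge to the image of $Q$, and since each of those images lies on the orbit of $P$, the image of $Q$ lies in the orbit closure of $P$. On the other hand, upper semicontinuity of $M$ makes $\{M \geq t\}$ closed, and this set contains every image of $Q_{k_j}$, hence also their limit; thus $M(Q) \geq t$. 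Combining with the trivial inequality $M(Q) \leq \N(Q)$ (take $Q$ itself as a representative in the infimum defining $M$) and with $\N(Q) \leq t$, we obtain $\N(Q) = M(Q) = M(P) = t$, as claimed. (When $t = 0$ the same steps apply and give $Q = 0$, whose image is $\phi$-fixed and lies in the orbit closure of $P$.)

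I do not anticipate a serious obstacle. The one point that must be handled with care is that the representatives we select do not run off to infinity — every orbit point has representatives of arbitrarily large norm, so an unconstrained choice of $Q_k$ need not converge — and this is exactly what the uniform bounds $|s_k|,|u_k| < \sqrt{\varepsilon(t+1/k)}$ from Lemma \ref{bmlemma} prevent. Everything else is soft: $\phi$-invariance of $M$ pins down the $M$-value along the orbit, continuity of $\N$ and of the reduction map lets us pass to the limit, and upper semicontinuity of $M$ supplies the single inequality $M(Q) \geq t$ that is not automatic.
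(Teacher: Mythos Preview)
Your argument is correct and essentially identical to the paper's: both apply Lemma \ref{bmlemma} with thresholds $t+1/k$ to obtain bounded representatives of orbit points, extract a convergent subsequence, and then combine continuity of $\N$, upper-semicontinuity of $M$, and the trivial inequality $M(Q)\leq \N(Q)$ to force equality. The only cosmetic difference is that the paper encloses the $Q_k$ in a fixed rectangle of side $2\sqrt{\varepsilon(M_1(K)+1)}$ rather than $2\sqrt{\varepsilon(t+1)}$.
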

\begin{proof}
	Let $\Rbig$ denote the rectangle in $V_K$ given by $|s|,|u|< \sqrt{\varepsilon(M_1(K)+1)}$. By Lemma \ref{bmlemma}, there is for each $n\in\N$ a point $Q_n\in \Rbig$ representing an element of orbit of $P$ with 
	\begin{equation}\label{qnbd}
	\N(Q_n)<M(P)+\frac{1}{n}
	\end{equation}
	Since $\Rbig$ is bounded, there exists a subsequence $Q_{k_n}$ converging to some point $Q$.  Observe that \[M(Q) \leq \N(Q) = \lim_{k\to\infty}\N(Q_{k_n})\leq M(P)\] where the last inequality follows from (\ref{qnbd}). The definition of $Q$ ensures that it represents an element of the orbit closure of $P$. But this implies that $M(Q)\geq M(P)$ by upper-semicontinuity, since the value $M(P)$ is common to the entire orbit of $P$, and the result follows.
\end{proof}

Let us call a point in $V_K$ with rational $(x,y)$ coordinates a \emph{$\Q$-point}. Similarly a \emph{$K$-point} is one whose $(x,y)$ coordinates lie in $K$, or equivalently whose $(s,u)$ coordinates lie in $K$. The set of $\Q$-points coincides with $K/\OO_K$, which is also the set of periodic points for $\phi$. In particular, if $P$ is a $\Q$-point then the previous lemma immediately implies that $P$ is determinate and $M(P)\in\Q$. 
\begin{proposition}\label{KptMK}
	Let $P\in\T_K$ be a $K$-point. 
\begin{enumerate}
\item There exists $N\in\NN$ such that \[\phi^k(NP)\longrightarrow 0\ \ \ \ \mbox{as}\ \ \ \ |k|\to\infty\]
\item $M(P)\in K$
\end{enumerate}
\end{proposition}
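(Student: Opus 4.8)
My plan is to prove part~(1) by exhibiting two convenient representatives of $NP$, and then to deduce part~(2) from part~(1) together with Lemma~\ref{orbclosure}. For part~(1), fix a representative $Q\in V_K$ of $P$ whose $(s,u)$-coordinates $\sigma,\upsilon$ lie in $K$ (such a representative exists since $P$ is a $K$-point), and choose $N\in\NN$ clearing the denominators of $\sigma$ and $\upsilon$, so that $N\sigma,N\upsilon\in\OO_K$. Recall that $\OO_K$ sits inside $V_K$ as the lattice of points with $(s,u)$-coordinates $(\overline m,m)$ for $m\in\OO_K$, and that $\phi$ acts on $(s,u)$-coordinates by $(s,u)\mapsto(\overline{\varepsilon}s,\varepsilon u)$ with $|\overline{\varepsilon}|=\varepsilon^{-1}<1<\varepsilon$. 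The point is that $NP$ is represented both by $(\tau,0)$ with $\tau=N\sigma-\overline{N\upsilon}\in\OO_K$, obtained by subtracting the lattice point $(\overline{N\upsilon},N\upsilon)$ from $(N\sigma,N\upsilon)$, and by $(0,\tau')$ with $\tau'=N\upsilon-N\sigma\in\OO_K$. Since $\phi^{k}(\tau,0)=(\overline{\varepsilon}^{k}\tau,0)\to 0$ in $V_K$ as $k\to+\infty$ and $\phi^{k}(0,\tau')=(0,\varepsilon^{k}\tau')\to 0$ as $k\to-\infty$, it follows that $\phi^{k}(NP)\to 0$ in $\T_K$ as $|k|\to\infty$; this part should be routine once the representatives are in hand.

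For part~(2), fix $N$ as in part~(1). The first thing I would establish is that the orbit closure of $P$ under $\phi$ differs from the orbit only by finitely many $\Q$-points: if $R$ is an accumulation point of the orbit, say $\phi^{k_j}(P)\to R$ with $|k_j|\to\infty$, then $\phi^{k_j}(NP)\to 0$ by part~(1), while $\phi^{k_j}(NP)=N\cdot\phi^{k_j}(P)\to NR$ by continuity of multiplication by $N$ on $\T_K$, so $NR=0$; and the $N$-torsion subgroup $\tfrac1N\OO_K/\OO_K$ is finite and consists of $\Q$-points, which have $M$-value in $\Q$. Now Lemma~\ref{orbclosure} supplies $Q^{*}\in V_K$ representing a point of the orbit closure of $P$ with $\N(Q^{*})=M(Q^{*})=M(P)$, and I would split into two cases. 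If $Q^{*}$ represents one of the adjoined $\Q$-points, then $M(P)=M(Q^{*})\in\Q\subseteq K$. Otherwise $Q^{*}$ represents a point $\phi^{k_0}(P)$ of the orbit, so $Q^{*}=\phi^{k_0}(Q)+m$ for some $m\in\OO_K$; as $\phi$ and translation by $\OO_K$ each preserve the property of having coordinates in $K$, the point $Q^{*}$ is a $K$-point, and writing $s^{*},u^{*}\in K$ for its $(s,u)$-coordinates gives $M(P)=\N(Q^{*})=|s^{*}u^{*}|\in K$. In every case $M(P)\in K$.

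The step I expect to be delicate is the passage from part~(1) to this control of the orbit closure, and, underneath it, the fact that $M$ is only upper semicontinuous, so that the value $M(P)$ cannot be transported across a limit directly. Lemma~\ref{orbclosure} is exactly what repairs this: it replaces the limiting argument by the structural question of which points lie in the orbit closure, and part~(1) answers that question by forcing every new point of the orbit closure to be a torsion point — hence a $\Q$-point with rational $M$-value — rather than an arbitrary element of $V_K$ about whose $M$-value nothing would be known.
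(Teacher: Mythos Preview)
Your proof is correct and follows essentially the same route as the paper's: in part~(1) you clear denominators and subtract lattice points of the form $(\overline m,m)$ to get representatives of $NP$ lying on each coordinate axis, and in part~(2) you combine part~(1) with Lemma~\ref{orbclosure} exactly as the paper does, splitting into the torsion case and the orbit case. One small slip: the representative with vanishing stable coordinate should be $(0,\,N\upsilon-\overline{N\sigma})$, obtained by subtracting the lattice point $(N\sigma,\overline{N\sigma})=(\overline m,m)$ with $m=\overline{N\sigma}$; your formula $\tau'=N\upsilon-N\sigma$ would require $(N\sigma,N\sigma)$ to be a lattice point, which fails unless $N\sigma\in\Z$.
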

\begin{proof}\
\begin{enumerate}
\item Since $P$ has $(s,u)$ coordinates in $K$, there exists $N\in\NN$ such that $NP$ has $(s,u)$ coordinates in $\OO_K$. In $(s,u)$ coordinates, the lattice $\OO_K\subseteq V_K$ is given by the set of pairs $(\overline{a},a)$ for $a\in\OO_K$. It follows by subtracting such elements that $NP$ has a representative whose stable coordinate vanishes, as well as a representative whose unstable coordinate vanishes. Now $\phi^{k}(NP)\to 0$ as $|k|\to \infty$ follows immediately. 
\item By the previous part, the orbit closure of the $K$-point $P$ consists of the orbit of $P$ together with a finite collection of $N$-torsion points on the torus. By Lemma \ref{orbclosure}, there exists $Q\in V_K$ representing an element of this orbit closure with $\N(Q) = M(P)$. Should $Q$ represent an $N$-torsion point, then $M(P)\in\Q$ since torsion points are $\Q$-points. On the other hand, if $Q=(s,u)$ represents an element of the orbit of $P$ then $Q$ is also a $K$-point, so we have $M(P)=\N(Q)=|su|\in K$. 
\end{enumerate}
\end{proof}

\section{Markov partitions}

For each $K$, the dynamical system $(\T_K,\phi)$ admits a Markov partition consisting of two open rectangles. Such a partition $\{R_0, R_1\}$ for $K=\Q(\sqrt{5})$ is pictured in Figure \ref{rq5morig} in $(x,y)$ coordinates. Figure \ref{morigsu} furnishes a uniform description  in $(s,u)$ coordinates of a two-rectangle Markov partition for any $K$. This description is simply the one provided by Adler in \cite{adler} translated into $(s,u)$ coordinates. See also \cite{snavely}, where the construction may originate. 
\begin{figure}[h]
    \centering
    \begin{minipage}{0.45\textwidth}
        \centering
        \includegraphics[width=0.5\textwidth]{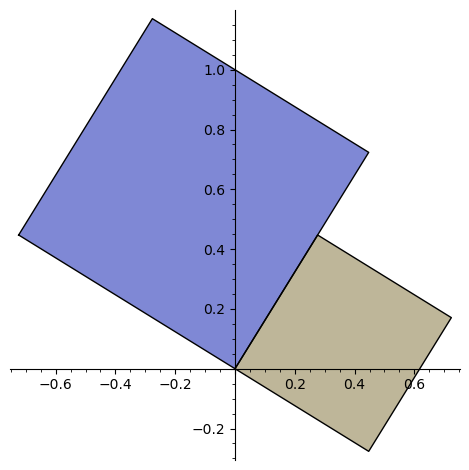}
	\caption{$\{R_0, R_1\}$ for $\Q(\sqrt{5})$}
	\label{rq5morig}
    \end{minipage}\hfill
    \begin{minipage}{0.45\textwidth}
        \centering
        \includegraphics[width=0.5\textwidth]{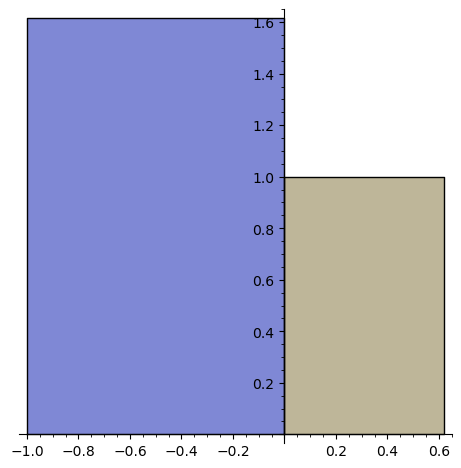}
	\put(-6,6){$\bullet$}
	\put(3,7){\small $(-\overline{\alpha_K},0)$}
	\put(-43,100){$\bullet$}
        \put(-37,100){\small $(0,\alpha_K)$}
        	\put(-43,65){$\bullet$}
        \put(-37,73){\small $(0,1)$}
        	\put(-102,6){$\bullet$}
        \put(-135,7){\small $(-1,0)$}
	\caption{\centering Original partition in $(s,u)$ coordinates (scale shown for $\Q(\sqrt{5})$)}
	\label{morigsu}
    \end{minipage}
\end{figure}

These two-rectangle partitions are typically not \emph{generators} essentially because the intersections $R\cap \phi(S)$ are generally disconnected. In the case of $\Q(\sqrt{5})$ however, the original partition $\scr{P}_0=\{R_0, R_1\}$ is a generator. Moreover, $R_0\cap\phi(R_0) = \emptyset$, while the remaining intersections consist of a single nonempty rectangle each. Let $\Sigma$ denote the subset of $\{0,1\}^\Z$ that avoids the string $00$ and let $\sigma:\Sigma\to\Sigma$ be the shift operator $\sigma(s)_i = s_{i+1}$.  The Markov generator property furnishes a map \[\pi: \Sigma \longrightarrow \T_K\] intertwining $\phi$ and the shift operator on $\Sigma$ that sends each string of coordinates to the unique point in $\T_K$ whose orbit has these coordinates: \[\pi(s) =  \bigcap_{n\in\NN}\overline{\bigcap_{i=-n}^n\phi^{-i}(s_i) }=   \bigcap_{i\in\Z} \phi^{-i}(\overline{s_i})\]  

\begin{remark}\label{closure}
	This construction ensures that $\phi^k\pi(s)\in\overline{s(k)}$ for all $k$. It follows that if the coordinate word of $A\in\scr{P}_n$ occurs in $s\in\Sigma$, then $\phi^k\pi(s)\in \overline{A}$ for a suitable $k\in\Z$. 
\end{remark}

The map $\pi$ is continuous, surjective, bounded-to-one, and essentially one-to-one.  Moreover, if $X\subseteq \T_K$ is a closed, invariant subset then $\pi$ restricts to a map \[\pi^{-1}(X)\longrightarrow X\] with the same properties, from which it follows that the entropy of $\phi|_X$ coincides with the entropy of the shift restricted to $\pi^{-1}(X)$. In the case of $X=X_t$, this entropy can be approximated by approximating the \emph{set} $\pi^{-1}(X_t)$ by subshifts obtained by refining the partition $\scr{P}_0$ and omitting some rectangles. The refinements are defined by taking $\scr{P}_n$ to consist of all nonempty intersections of the form 
\begin{equation}\label{tots}
\phi^{n}(A_{-n})\cap \cdots \cap \phi^{}(A_{-1})\cap A_0\cap\phi^{-1}(A_1)\cap\cdots \cap \phi^{-n}(A_n),\ \ \ A_i\in \scr{P}_0,
\end{equation}
and we say that this particular rectangle has \emph{coordinate word} $A_{-n}\cdots A_0\cdots A_n$. When a representative rectangle in the plane $V_K$ is needed for a member of $\scr{P}_n$, we take the one contained in the original footprint $R_0\cup R_1$.
 
The refinement $\scr{P}_n$ is also a Markov generator, and we have a refined coding $\pi_n:\Sigma_n\longrightarrow \T_K$ by the set of admissible strings in the alphabet $\scr{P}_n$. Note that $\Sigma_n$ is simply a ``block form" of $\Sigma$ and there is a canonical bijection $\Sigma\cong\Sigma_n$ compatible with the shift operator and the two codings of $\T_K$. While for general $K$, the partition $\{R_0, R_1\}$ is not a generator, in all cases the connected components of $A_0\cap \phi^{-1}(A_1)$ for $A_i\in \{R_0, R_1\}$ do comprise a Markov generator (see the proof of Theorem 8.4 of \cite{adler}).  Thus for any $K$ other than $\Q(\sqrt{5})$ we may let $\scr{P}_0$ denote this generator and then proceed as in the previous paragraph to produce refinements $\scr{P}_n$. In all cases, the diameter of $\scr{P}_n$ tends to zero as $n\to\infty$.

The following explicit construction of $\pi$ will be useful below. Here, $\scr{P}$ can be any Markov generator on $\T_K$ arising from a collection of rectangles in the plane $V_K$ with sides parallel to the stable and unstable axes. In particular we suppose we have a chosen representative in the plane for each member of $\scr{P}$, or equivalently a choice of stable and unstable interval of which this member is the product. Let $s\in \Sigma$, the set of all admissible bi-infinite strings in the alphabet $\scr{P}$.  First we show how to compute the unstable coordinate of $\pi(s)$. The intersections 
\begin{align*}
  r_0 = {} & s_0\\\ r_1 = {} &  s_0\cap \phi^{-1}(s_1)\\  r_2 = {} & s_0\cap \phi^{-1}(s_1)\cap \phi^{-2}(s_2) \\  \vdots &
\end{align*}
on the torus can be viewed in the plane as a sequence of rectangles within $s_0$ whose stable interval is constant (and equal to that of $s_0$) and whose unstable interval is shrinking. Up to similarity, the footprint of the unstable interval of $r_{i+1}$ inside that of $r_i$ depends only on the rectangles $s_i$ and $s_{i+1}$ and is independent of $i$. This is because $\phi$ simply scales by the positive number $\varepsilon$ in the unstable direction, preserving similarity. 

Given a rectangle in the plane with sides parallel to the stable and unstable axes, let us denote its stable and unstable intervals by $[\alpha_s(A),\beta_s(A)]$ and $[\alpha_u(A), \beta_u(A)]$, and let $\ell_*(A) = \beta_*(A)-\alpha_*(A)$ denote the corresponding lengths. For each pair $A,B\in \scr{P}$ with $AB$ admissible, we define \[\rho_u(A,B) = \frac{\alpha_u(A\cap\phi^{-1}(B))-\alpha_u(A)}{\ell_u(A)}\]  Pictured in the $(s,u)$ plane, this is the height of the bottom of the subrectangle $A\cap \phi^{-1}(B)$ inside $A$, expressed as a fraction of the total height of $A$, and is a measure of the footprint of this subrectangle in $A$ alluded to above. The left endpoint of the unstable interval of $r_i$ is then equal to \[\alpha_u(s_0)+\rho_u(s_0,s_1)\ell_u(s_0)+\rho_u(s_1,s_2)\frac{\ell_u(s_1)}{\varepsilon} + \cdots + \rho_u(s_{i-1},s_i)\frac{\ell_u(s_{i-1})}{\varepsilon^{i-1}},\]  so the unstable coordinate of $\pi(s)$ is given by the series
\begin{equation}\label{unstableseries}
\alpha_u(s_0)+\rho_u(s_0,s_1)\ell_u(s_0)+\rho_u(s_1,s_2)\frac{\ell_u(s_1)}{\varepsilon} + \rho_u(s_2,s_3)\frac{\ell_u(s_2)}{\varepsilon^2} +\cdots 
\end{equation}

The stable coordinate works the same way if $\overline{\varepsilon}>0$. Some additional care must be taken if $\overline{\varepsilon}<0$, since then $\phi$ is orientation-reversing in the stable direction and the footprints alternate with their mirror images up to similarity instead of being independent of $i$. In that case we define coefficients \[\rho_s^{+}(A,B) = \frac{\alpha_s(A\cap\phi(B))-\alpha_s(A)}{\ell_s(A)}\] and \[\rho_s^-(A,B) =  \frac{\beta_s(A)-\beta_s(A\cap\phi(B))}{\ell_s(A)},\] and the stable coordinate alternates between these: 
\begin{equation}\label{stableseries}
\alpha_s(s_0)+\rho^+_s(s_0,s_{-1})\ell_s(s_0)+\rho_s^-(s_{-1},s_{-2})\frac{\ell_s(s_{-1})}{\varepsilon} + \rho_s^+(s_{-2},s_{-3})\frac{\ell_s(s_{-2})}{\varepsilon^2}+\cdots 
\end{equation}

Let us now return to the partitions $\scr{P}_n$ derived from the two-rectangle partition above. If $s\in\Sigma$ is periodic, then the image $\pi(s)\in\T_K$ has periodic orbit, and hence is a $\Q$-point. The following lemma furnishes a similar description of some $K$-pts.
\begin{lemma}\label{periodK}
	Suppose that $s$ is eventually periodic in both directions. Then $\pi(s)$ is a $K$-point. 
\end{lemma}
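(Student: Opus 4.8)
The plan is to exploit the explicit series representations \eqref{unstableseries} and \eqref{stableseries} for the coordinates of $\pi(s)$, and show that when $s$ is eventually periodic in the positive direction the unstable coordinate is forced to lie in $K$, and symmetrically for the stable coordinate when $s$ is eventually periodic in the negative direction. Since the coefficients $\rho_u(A,B)$, $\ell_u(A)$, $\rho_s^\pm(A,B)$, $\ell_s(A)$ and $\varepsilon$ all lie in $K$ — the rectangles of any $\scr{P}_n$ are cut out by $K$-linear conditions from the original $K$-rational partition, so all their endpoints are in $K$ — this reduces the lemma to the elementary observation that an eventually periodic geometric-type series with coefficients in $K$ and ratio a power of $1/\varepsilon \in K$ sums to an element of $K$.

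Concretely, first I would fix a Markov generator $\scr{P}$ (say some $\scr{P}_n$) refined enough that $s$, rewritten in the alphabet of $\scr{P}$, is genuinely eventually periodic in each direction; passing between $\Sigma$ and $\Sigma_n$ is harmless by the canonical shift-compatible bijection noted after \eqref{tots}. Say the forward tail of $s$ is periodic with period $p$ starting at index $m$, i.e. $s_{i+p} = s_i$ for all $i \geq m$. Then in the series \eqref{unstableseries} the partial sum up to index $m$ is a finite $K$-linear combination of the $\rho_u$'s and $\ell_u$'s, hence in $K$; and the tail from index $m$ onward, after factoring out $\varepsilon^{-m}$, is $\varepsilon^{-m}$ times a sum whose terms repeat with period $p$ up to the scaling factor $\varepsilon^{-p}$. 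That tail is therefore $\varepsilon^{-m}\big(c_0 + c_1\varepsilon^{-p} + c_2\varepsilon^{-2p}+\cdots\big) = \varepsilon^{-m}\cdot c_0/(1-\varepsilon^{-p})$ for a single $c_0 \in K$ (a finite sum of the relevant $\rho_u \ell_u$ terms over one period), using $\varepsilon > 1$ so the series converges. Hence the unstable coordinate of $\pi(s)$ lies in $K$.

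The stable coordinate is handled the same way, with one bookkeeping wrinkle: when $\overline\varepsilon < 0$ the series \eqref{stableseries} alternates between $\rho_s^+$ and $\rho_s^-$ coefficients, so one should group the period into an even block (replace $p$ by $2p$ if necessary) so that the repeating pattern of coefficients genuinely has a common period; then the same geometric-sum argument with ratio $\varepsilon^{-2p}$ (or $\varepsilon^{-p}$ in the $\overline\varepsilon>0$ case) applies and gives a stable coordinate in $K$. With both coordinates of $\pi(s)$ in $K$, the point $\pi(s)$ is a $K$-point by definition, completing the proof.

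The only real thing to be careful about — I expect this to be the single substantive point rather than a genuine obstacle — is verifying that the combinatorial data $\rho_u(A,B)$, $\rho_s^\pm(A,B)$, $\ell_u(A)$, $\ell_s(A)$ attached to a Markov generator $\scr{P}_n$ all lie in $K$. This follows because $\scr{P}_0$ is described by rectangles with vertices in $K$ (explicit in $(s,u)$ coordinates from Figure~\ref{morigsu}, with $\alpha_K \in K$), the map $\phi$ acts $K$-linearly, and each member of $\scr{P}_n$ is a connected component of an intersection of the form \eqref{tots} of such rectangles, so its stable and unstable intervals have endpoints in $K$; hence the displayed ratios defining $\rho_u$ and $\rho_s^\pm$ are quotients of elements of $K$, and $\varepsilon \in K$.
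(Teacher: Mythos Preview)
Your proposal is correct and follows essentially the same approach as the paper's own proof: observe that the rectangles in $\scr{P}_n$ have $K$-coordinates, so the data $\alpha_u,\alpha_s,\ell_u,\ell_s,\rho_u,\rho_s^\pm$ lie in $K$, and then note that for an eventually periodic $s$ the series \eqref{unstableseries} and \eqref{stableseries} decompose into finitely many geometric series with ratio a power of $\varepsilon^{-1}$ and coefficients in $K$. The paper's proof is simply a terser statement of exactly this argument; your explicit handling of the $\overline{\varepsilon}<0$ case by doubling the period is a fine way to organize the bookkeeping the paper leaves implicit.
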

\begin{proof}
	First observe that all members of our partitions $\scr{P}_n$ have coordinates in the field $K$. If $s$ is eventually periodic in both directions, then the series (\ref{unstableseries}) and (\ref{stableseries}) (and its analog in case $\overline{\varepsilon}>0$)  decompose into finitely many geometric series with all terms and coefficients expressible in terms of these coordinates, and the result follows.
\end{proof}

\begin{lemma}\label{newword}
	If $t'<t$ and $X_t\subsetneq X_{t'}$, then there exists a finite word occurring in $\pi^{-1}(X_{t'})$ that does not occur in $\pi^{-1}(X_t)$. 
\end{lemma}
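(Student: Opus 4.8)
The plan is to deduce this from a single soft principle about two-sided subshifts, namely that a subshift is recovered from its language. First I would record that $Y_t := \pi^{-1}(X_t)$ and $Y_{t'} := \pi^{-1}(X_{t'})$ are genuine subshifts of $\Sigma$: they are closed because $\pi$ is continuous and $X_t, X_{t'}$ are closed (upper-semicontinuity of $M$), and they are shift-invariant because $\pi$ intertwines $\sigma$ with $\phi$ and $X_t, X_{t'}$ are $\phi$-invariant. I only need ``subshift'' in the weak sense of closed and $\sigma$-invariant --- not an SFT. The principle I would then invoke (and prove in two lines) is: for any subshift $Y\subseteq\Sigma$, a point $x\in\Sigma$ lies in $Y$ as soon as every finite subword of $x$ occurs in some element of $Y$. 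Indeed, if that holds, then for each $n$ the central block $x_{-n}\cdots x_n$ occurs in $Y$, and applying a suitable power of $\sigma$ (legitimate since $\sigma(Y)=Y$) produces $z^{(n)}\in Y$ agreeing with $x$ on $\{-n,\dots,n\}$; since $z^{(n)}\to x$ and $Y$ is closed, $x\in Y$.

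Now I would run the argument by contradiction at the level of a single orbit. By hypothesis $X_t\subsetneq X_{t'}$, so choose $P\in X_{t'}$ with $M(P)<t$, i.e.\ $P\notin X_t$, and by surjectivity of $\pi$ choose $s\in\Sigma$ with $\pi(s)=P$; then $s\in Y_{t'}$. If \emph{every} finite subword of $s$ occurred in $Y_t$, the reconstruction principle applied to the subshift $Y_t$ would force $s\in Y_t$, hence $P=\pi(s)\in X_t$, a contradiction. Therefore some finite subword $w$ of $s$ does not occur in $Y_t=\pi^{-1}(X_t)$; and $w$ does occur in $Y_{t'}=\pi^{-1}(X_{t'})$, being a subword of $s\in Y_{t'}$. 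This $w$ is the word required by the lemma.

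I do not anticipate a real obstacle here; the content is entirely in the organization. The one point deserving care is that $\pi$ is only bounded-to-one, not injective, so one cannot simply ``invert'' $\pi$ on the point $P$ and argue in $\Sigma$ directly --- the hypothesis ``every subword of $s$ occurs in $\pi^{-1}(X_t)$'' must be read as a statement about the subshift $\pi^{-1}(X_t)$ as a whole, which is exactly the form in which the reconstruction principle delivers $s\in\pi^{-1}(X_t)$. It is worth noting where the hypotheses enter: upper-semicontinuity of $M$ is what makes $X_t$ closed, hence $\pi^{-1}(X_t)$ closed, hence the reconstruction principle applicable; while $t'<t$ (giving $X_t\subseteq X_{t'}$, so that $w$ is automatically a word of $\pi^{-1}(X_{t'})$) together with the strictness of $X_t\subsetneq X_{t'}$ supplies the point $P$.
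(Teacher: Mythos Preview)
Your proposal is correct and follows essentially the same approach as the paper: both argue by contradiction, invoking the fact that a closed shift-invariant subset of $\Sigma$ is determined by its language, and construct approximating sequences $z^{(n)}\to s$ by centering the blocks $s(-n)\cdots s(n)$. The only cosmetic difference is that you fix a single witness $s\in\pi^{-1}(X_{t'})\setminus\pi^{-1}(X_t)$ at the outset, while the paper phrases the contradiction as a global containment $\pi^{-1}(X_{t'})\subseteq\pi^{-1}(X_t)$; the underlying mechanism is identical.
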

\begin{proof}
	Suppose to the contrary that every word appearing in $\pi^{-1}(X_{t'})$ also occurs in $\pi^{-1}(X_t)$. We claim this forces $\pi^{-1}(X_{t'})$ to be contained in the closure of $\pi^{-1}(X_t)$, which is a contradiction since the latter is closed assumed distinct from the former.  Let $s\in \pi^{-1}(X_{t'})$, and for $k\in\NN$ let $w_k$ be the word $s(-k)\cdots s(0)\cdots s(k)$. By hypothesis, this word occurs in $\pi^{-1}(X_t)$, and by applying $\phi$ we may assume that it occurs centrally in some element $x_k\in \pi^{-1}(X_t)$.  In particular, $x_k$ and $s$ agree on the index interval $[-k,k]$, and it follows that $x_k\to s$ as $k\to \infty$, so $s$ lies in the closure of $\pi^{-1}(X_t)$. 
\end{proof}

\section{Upper bounds via trapping rectangles}\label{sec:ub}

Given a collection of rectangles $\scr{C}\subseteq \bigcup_n \scr{P}_n$, we denote by $\Sigma\langle \scr{C}\rangle$ the subshift of $\Sigma$ that avoids the coordinate words of elements of $\scr{C}$. If $\scr{C}$ is finite, then there is a largest $n$ for which $\scr{P}_n$ contains an element of $\scr{C}$. Now every element of $\scr{C}$ breaks up into rectangles in $\scr{P}_n$, and we let $\scr{C}'\subseteq \scr{P}_n$ denote the collection of rectangles occurring in this fashion. Under the identification $\Sigma\cong\Sigma_n$, the subshift $\Sigma\langle\scr{C}\rangle$ can alternately be described as the collection of $s\in \Sigma_n$ for which $s(k)\notin\scr{C}'$ for all $k\in\Z$. 

Let $I\subseteq\OO_K$ be a finite set of lattice points and let \[\scr{U}(t,I) = \bigcup_{Q\in I} \{P\in V_K\ |\ N(P-Q)<t\}\] and let
\[\scr{T}_n(t,I) = \left\{ A \in \scr{P}_n\ \left|\ \overline{A}\subseteq \scr{U}(t,I) \right.\right\}\] be the collection of rectangles in $\scr{P}_n$ whose closures are trapped within the norm-distance $t$ ``neighborhood'' of some lattice point in $I$. The following lemma says that $\Sigma\langle\scr{T}_n(t,I)\rangle$ is an upper bound not only for $X_t$ but for $X_{t-\eta}$ for some $\eta>0$. 
\begin{lemma}\label{upperbound}
There exists $\eta>0$ such that $\pi^{-1}(X_{t-\eta}) \subseteq \Sigma\langle\scr{T}_n(t,I)\rangle$.
\end{lemma}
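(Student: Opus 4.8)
The plan is to exploit the fact that $\scr{T}_n(t,I)$ is a \emph{finite} collection of rectangles, each of which is \emph{open}-ly trapped in the sense that its closure lies inside the open set $\scr{U}(t,I)$, and then to gain room by shrinking $t$ slightly. First I would observe that for each $A\in\scr{T}_n(t,I)$ the closed rectangle $\overline{A}$ is compact and contained in the open set $\scr{U}(t,I)$, so by compactness there is $\eta_A>0$ with $\overline{A}\subseteq\scr{U}(t-\eta_A,I)$; indeed since $\scr{U}(t,I)=\bigcup_{Q\in I}\{P\ |\ \N(P-Q)<t\}$ and $I$ is finite, the function $P\mapsto\min_{Q\in I}\N(P-Q)$ is continuous and attains a maximum $<t$ on $\overline{A}$. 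Taking $\eta=\min_{A\in\scr{T}_n(t,I)}\eta_A>0$ (the minimum of finitely many positive numbers), we get $\overline{A}\subseteq\scr{U}(t-\eta,I)$ for every $A\in\scr{T}_n(t,I)$ simultaneously.

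Next I would translate this into a statement about codings. Suppose $s\in\pi^{-1}(X_{t-\eta})$, and suppose for contradiction that $s\notin\Sigma\langle\scr{T}_n(t,I)\rangle$, i.e. the coordinate word of some $A\in\scr{T}_n(t,I)$ occurs in $s$. By Remark \ref{closure}, this forces $\phi^k\pi(s)\in\overline{A}$ for a suitable $k\in\Z$. Since $\overline{A}\subseteq\scr{U}(t-\eta,I)\subseteq\scr{U}(t-\eta)$, the point $\phi^k\pi(s)$ lies in $\scr{U}(t-\eta)$, whose complement in $\T_K$ is exactly $X_{t-\eta}$. Hence $\phi^k\pi(s)\notin X_{t-\eta}$, so $M(\phi^k\pi(s))<t-\eta$. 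But $M$ is $\phi$-invariant, so $M(\pi(s))=M(\phi^k\pi(s))<t-\eta$, contradicting $\pi(s)\in X_{t-\eta}$. Therefore no coordinate word of an element of $\scr{T}_n(t,I)$ occurs in $s$, i.e. $s\in\Sigma\langle\scr{T}_n(t,I)\rangle$, which is the desired inclusion.

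I do not expect a serious obstacle here; the only point requiring care is making sure the ``room'' $\eta$ can be chosen uniformly over the trapping family, which is why finiteness of $I$ (guaranteeing $\scr{T}_n(t,I)$ is finite, since $\scr{P}_n$ is finite) is essential — an infinite trapping family could have $\eta_A\to 0$. A second small point is keeping the two neighborhoods straight: $\scr{U}(t-\eta,I)$ (union over the finite set $I$) sits inside the full $\scr{U}(t-\eta)$ (union over all of $\OO_K$), so trapping relative to $I$ is stronger than what is needed, and the passage from $\overline{A}\subseteq\scr{U}(t-\eta,I)$ to $\phi^k\pi(s)\notin X_{t-\eta}$ goes through without incident. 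One should also note that $\phi$ maps the representative footprint $R_0\cup R_1$ around, so when invoking Remark \ref{closure} one works on the torus $\T_K$ rather than with a fixed planar representative; this is exactly how that remark is phrased, so no extra argument is needed.
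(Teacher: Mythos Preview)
Your proof is correct and follows essentially the same approach as the paper's: obtain a uniform $\eta>0$ from compactness of the finitely many $\overline{A}$ inside the open set $\scr{U}(t,I)$, then use Remark \ref{closure} and $\phi$-invariance of $M$ to derive a contradiction. The only (harmless) wrinkle is your parenthetical attributing finiteness of $\scr{T}_n(t,I)$ to finiteness of $I$; in fact $\scr{T}_n(t,I)\subseteq\scr{P}_n$ is automatically finite, while finiteness of $I$ is what makes $P\mapsto\min_{Q\in I}\N(P-Q)$ continuous in your compactness step.
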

\begin{proof}
	The elements of $\scr{T}_n(t,I)$ have closures contained in the $\scr{U}(t,I)$ and thus in $\scr{U}(t-\eta,I)$ for some $\eta>0$ since $I$ is finite. If $s\in \Sigma$ contains the coordinates of $A\in\scr{T}_n(t,I)$, then $\phi^k\pi(s)$ lies in $\overline{A}$ for some $k$, by Remark \ref{closure}. But then $M(\pi(s)) = M(\phi^k\pi(s)) < t-\eta$. Thus $s\notin \pi_n^{-1}(X_{t-\eta})$. 
\end{proof}
The entropy of $\phi$ on $X_t$ is thus bounded above by the shift entropy of  $\Sigma\langle\scr{T}_n(t,I)\rangle$, which is computable by Perron-Frobenius theory. These upper bounds depend on the set $I\subseteq \OO_K$ and improve as $I$ grows. The following proposition and its corollary ensure that it is possible to choose $I$ so that the bounds are tight in the limit as $n\to \infty$. 
\begin{proposition}\label{uppertight} 
	There exists a finite set $I_K$ such that if $I_K\subseteq I$ and $t'<t$, then for $n$ sufficiently large we have 
	\begin{equation}\label{tighteq}
	\pi^{-1}(X_t)\subseteq \Sigma\langle \scr{T}_n(t,I)\rangle\subseteq \pi^{-1}(X_{t'})
	\end{equation}
	 In particular, for such $I$ we have 
	 \[\pi^{-1}(X_t) = \bigcap_{n\geq 0}\Sigma\langle \scr{T}_n(t,I)\rangle\]
\end{proposition}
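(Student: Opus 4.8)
The plan is to produce the set $I_K$ by analyzing which lattice points can come close to the footprint $R_0\cup R_1$ under the absolute norm, and then to combine two containments: the left-hand inclusion of (\ref{tighteq}), which should be essentially formal, and the right-hand inclusion, which is the substantive part and where I would lean on Lemma \ref{upperbound} together with a compactness/finiteness argument. For the left inclusion, suppose $s\in\pi^{-1}(X_t)$, so $M(\pi(s))\geq t$. If some coordinate word of $A\in\scr{T}_n(t,I)$ occurred in $s$, then by Remark \ref{closure} we would have $\phi^k\pi(s)\in\overline{A}\subseteq\scr{U}(t,I)$ for some $k$, forcing $M(\phi^k\pi(s))<t$; but $M$ is $\phi$-invariant, contradicting $M(\pi(s))\geq t$. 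Hence $s$ avoids all those words and lies in $\Sigma\langle\scr{T}_n(t,I)\rangle$. Note this direction needs no hypothesis on $I$ and no largeness of $n$.

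For the right inclusion I would argue by contradiction along a sequence of $n$'s. Suppose the inclusion $\Sigma\langle\scr{T}_n(t,I)\rangle\subseteq\pi^{-1}(X_{t'})$ fails for infinitely many $n$; pick $s_n\in\Sigma\langle\scr{T}_n(t,I)\rangle$ with $s_n\notin\pi^{-1}(X_{t'})$, i.e. $M(\pi(s_n))<t'$. By Lemma \ref{bmlemma} (via Lemma \ref{orbclosure}, or directly after applying a power of $\phi$), replace $\pi(s_n)$ by a representative $Q_n=(s,u)\in V_K$ in the bounded rectangle $\Rbig$ with $\N(Q_n)=M(\pi(s_n))<t'<t$; so $Q_n$ lies within norm-distance $t$ of the lattice point $0$. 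The point of choosing $I_K$ correctly is that we want this ``witnessing'' lattice point — and all the nearby lattice points relevant to the rectangles of $\scr{P}_n$ containing $\phi^k(Q_n)$ — to already be in $I_K$: concretely, I would let $I_K$ be the (finite) set of $Q\in\OO_K$ such that the open ball $\{\N(P-Q)<M_1(K)+1\}$ meets the footprint $R_0\cup R_1$. Since $\scr{P}_n$ has diameter tending to $0$, for $n$ large the rectangle $A_n\in\scr{P}_n$ containing (a suitable $\phi$-translate of) $Q_n$ has $\overline{A_n}$ contained in the open set $\scr{U}(t,I_K)\subseteq\scr{U}(t,I)$, because $\overline{A_n}$ shrinks down toward a point lying within norm-distance $<t'$ of a lattice point in $I_K$ and $\scr{U}(t,I_K)$ is open. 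That puts $A_n\in\scr{T}_n(t,I)$, and its coordinate word occurs in $s_n$ (Remark \ref{closure} again), contradicting $s_n\in\Sigma\langle\scr{T}_n(t,I)\rangle$.

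Two points need care, and I expect the second to be the main obstacle. First, the quantifier order: we are not producing a single $n$ that works for all $s$ simultaneously for free — rather, the contradiction argument above shows that for $n$ past some threshold \emph{every} $s\in\Sigma\langle\scr{T}_n(t,I)\rangle$ satisfies $M(\pi(s))\geq t'$; the threshold depends on $t,t',I$ but not on $s$, which is exactly what is claimed. Second, and more delicate: I must ensure that when $\overline{A_n}$ is small it is genuinely \emph{trapped} inside $\scr{U}(t,I_K)$ rather than merely meeting it, and that the relevant lattice point is in the finite set $I_K$ and not some far-away point. This is where the definition of $I_K$ must be chosen with a margin: the ball radius $\sqrt{M_1(K)+1}$ (or $M_1(K)+1$ in norm) is what Lemma \ref{bmlemma} and the proof of Lemma \ref{orbclosure} use to confine representatives to $\Rbig$, so any lattice point within norm-distance $\leq t\leq M_1(K)+1$ of a point of the footprint, or reachable after the bounded $\phi$-translation, is captured. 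Once $I_K$ is pinned down with this slack, openness of $\scr{U}(t,I_K)$ plus $\mathrm{diam}(\scr{P}_n)\to 0$ closes the gap.

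Finally, the displayed equation $\pi^{-1}(X_t)=\bigcap_{n\geq 0}\Sigma\langle\scr{T}_n(t,I)\rangle$ follows: the left inclusion of (\ref{tighteq}) gives $\pi^{-1}(X_t)\subseteq\bigcap_n\Sigma\langle\scr{T}_n(t,I)\rangle$, while the right inclusion, applied for every $t'<t$ and the corresponding large $n$, gives $\bigcap_n\Sigma\langle\scr{T}_n(t,I)\rangle\subseteq\pi^{-1}(X_{t'})$ for all $t'<t$, hence $\bigcap_n\Sigma\langle\scr{T}_n(t,I)\rangle\subseteq\bigcap_{t'<t}\pi^{-1}(X_{t'})=\pi^{-1}\!\big(\bigcap_{t'<t}X_{t'}\big)=\pi^{-1}(X_t)$, the last step because $X_t=\bigcap_{t'<t}X_{t'}$ by definition of the $X_t$'s as superlevel sets of $M$.
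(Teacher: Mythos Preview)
Your overall architecture matches the paper's: the left inclusion is formal via Remark \ref{closure}, and for the right inclusion you pass via Lemma \ref{bmlemma} to a representative $Q_n\in\Rbig$, then use $\mathrm{diam}(\scr{P}_n)\to 0$ together with a fixed finite set $I_K$ of lattice points to trap the relevant rectangle in $\scr{U}(t,I)$. The deduction of the displayed intersection formula at the end is also correct and just spells out what the paper calls ``immediate.''

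There is, however, a genuine gap in your choice of $I_K$. You define $I_K$ as the set of $Q\in\OO_K$ for which the ``ball'' $\{P:\N(P-Q)<M_1(K)+1\}$ meets the footprint $R_0\cup R_1$, and assert this set is finite. But $\N$ is indefinite: in $(s,u)$ coordinates $\N(s,u)=|su|$, so $\{\N(P-Q)<c\}$ is an unbounded hyperbolic region, and infinitely many lattice points (for instance the unit powers $\varepsilon^k$, whose stable coordinate tends to $0$) satisfy your condition. The finiteness of $I_K$ is not cosmetic---it is exactly what makes $\scr{T}_n(t,I)$ a finite collection and what lets the compactness step go through. The paper fixes this by defining $I_K$ via the \emph{Euclidean} bound that you already have in hand: $I_K$ is the set of $q\in\OO_K$ such that some $R\in\scr{P}_0$ has $R-q$ meeting $\Rbig$. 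This is finite because both $\Rbig$ and the footprint are bounded, and it captures every $q$ arising in your argument because $Q_n\in\Rbig$ and $Q_n+q_n\in\overline{A_n}\subseteq\overline{R_0\cup R_1}$.

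A smaller point: the sentence ``$\overline{A_n}$ shrinks down toward a point'' is misleading, since $Q_n$ varies with $n$; there is no fixed limit point. What you actually need (and what you gesture at with ``openness plus diameter'') is the uniform statement: $\overline{\{\N<t'\}\cap\Rbig}$ is compact and contained in the open set $\{\N<t\}$, so there is a single $N$ such that for every $n\geq N$, any translate of any $A\in\scr{P}_n$ meeting the former lies in the latter. The paper states this $N$ up front and then runs the contrapositive once, which is cleaner than threading a sequence $(s_n)$ through a contradiction.
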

\begin{proof} 
The second assertion here follows immediately from the first. Let $\Rbig$ denote the rectangle in $V_K$ given by $|s|,|u|<\sqrt{\varepsilon(M_1(K)+1)}$ and let $I_K$ be the set of all $q\in\OO_K$ such that $R-q$ meets $\Rbig$ for some $R\in\scr{P}_0$. The set $I_K$ is finite and necessarily contains any $q$ for which there exists some $A\in \scr{P}_n$ such that $A-q$ meets $\Rbig$.  Since the diameter of $\scr{P}_n$ tends to zero, there exists $N\in\NN$ such that $n\geq N$ implies that every translate of $A$ that meets the region defined by $\N<t'$ in $\Rbig$ must have closure entirely contained within the region $\N<t$. 

The first containment in (\ref{tighteq}) is clear from the preceding lemma, and we prove the second by contrapositive. Suppose that $s\in\Sigma$ is not in $\pi^{-1}(X_{t'})$. Then with $P=\pi(s)$ we have $M(P) < t'$, so  \[M(P)< t'' = \min(t',M_1(K)+1)\] Thus we may take $Q=(s,u)$ as in Lemma \ref{bmlemma} representing an element of the orbit of $P$ with $\N(Q)<t''$ and \[|s|,|u| < \sqrt{\varepsilon t''}\] In particular, $Q\in \Rbig$. For each $n$, the point $Q$ lies in the $\OO_K$-translates of the the closures of one or more members of the partition $\scr{P}_n$. Let $A\in \scr{P}_n$ and $q\in \OO_K$ such that $Q\in \overline{A}-q$. Thus $\overline{A}-q$ meets the region defined by $\N<t'$ in $\Rbig$, which requires that $A-q$ meet this region since $A$ is open, and hence $q\in I_K\subseteq I$. Now if $n\geq N$, it follows that $A\in \scr{T}_n(t,I)$.

By Remark \ref{closure}, the $0$th symbolic coordinate of any element of $\pi_n^{-1}(Q)$ must be a member of $\scr{T}_n(t,I)$, which implies that each element of $\pi_n^{-1}(P)$ has some symbolic coordinate in $\scr{T}_n(t,I)$.  This is to say that each element of $\pi^{-1}(P)$, including $s$, contains the coordinates of some element of $\scr{T}_n(t,I)$, and thus $s\notin\Sigma\langle \scr{T}_n(t,I)\rangle$.
\end{proof}

\begin{corollary}
	If $I_K\subseteq I$, then \[h(\phi|X_t) = \lim_{n\to \infty} h(\sigma|\Sigma\langle \scr{T}_n(t,I)\rangle\]
\end{corollary}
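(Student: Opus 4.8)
The plan is to deduce this corollary directly from Proposition \ref{uppertight} together with the variational/approximation properties of entropy for subshifts, using the fact established earlier that the entropy of $\phi|_{X_t}$ equals the entropy of $\sigma$ restricted to $\pi^{-1}(X_t)$. So the first step is to record that identification: since $X_t$ is closed and $\phi$-invariant, $\pi$ restricts to a map $\pi^{-1}(X_t)\to X_t$ that is continuous, surjective, bounded-to-one and essentially one-to-one, so $h(\phi|X_t) = h(\sigma|\pi^{-1}(X_t))$. Thus it suffices to prove $h(\sigma|\pi^{-1}(X_t)) = \lim_n h(\sigma|\Sigma\langle\scr{T}_n(t,I)\rangle)$.

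Next I would set up the monotonicity. For fixed $I\supseteq I_K$, the collections $\scr{T}_n(t,I)$ give rise to a decreasing nested sequence of subshifts: refining the partition can only turn a rectangle whose closure sits inside $\scr{U}(t,I)$ into several rectangles with the same property, so avoiding the coordinate word of $A\in\scr{T}_n(t,I)$ forces avoiding the coordinate words of all the pieces $A$ breaks into in $\scr{P}_{n+1}$, hence $\Sigma\langle\scr{T}_{n+1}(t,I)\rangle\subseteq\Sigma\langle\scr{T}_n(t,I)\rangle$. (This nesting, together with Lemma \ref{upperbound} and the first inclusion of (\ref{tighteq}), already sandwiches things.) Entropy is monotone under inclusion of subshifts, so $h(\sigma|\Sigma\langle\scr{T}_n(t,I)\rangle)$ is a non-increasing sequence bounded below by $h(\sigma|\pi^{-1}(X_t))$ by the first containment of Proposition \ref{uppertight}; therefore the limit exists and is $\geq h(\sigma|\pi^{-1}(X_t))$.

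For the reverse inequality I would invoke the standard fact that topological entropy is upper semicontinuous with respect to decreasing intersections of subshifts of $\Sigma$: if $Y_n$ is a decreasing sequence of closed shift-invariant subsets of a shift of finite type with $\bigcap_n Y_n = Y$, then $h(\sigma|Y_n)\to h(\sigma|Y)$. This is because the $n$-block complexity of $\bigcap_m Y_m$ agrees with that of $Y_n$ once $n$ is large relative to the block length (any forbidden block for $Y$ is already forbidden for some $Y_n$, and there are finitely many blocks of each length), so the exponential growth rates converge. Applying this with $Y_n = \Sigma\langle\scr{T}_n(t,I)\rangle$ and, by the last assertion of Proposition \ref{uppertight}, $\bigcap_n Y_n = \pi^{-1}(X_t)$, gives $\lim_n h(\sigma|\Sigma\langle\scr{T}_n(t,I)\rangle) = h(\sigma|\pi^{-1}(X_t)) = h(\phi|X_t)$, as desired.

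The main obstacle is not any single deep estimate but making sure the entropy-continuity input is cited or proved at the right level of generality: one needs that for a \emph{decreasing} family of subshifts the entropies converge to the entropy of the intersection, which is exactly the content of Proposition \ref{uppertight}'s conclusion $\pi^{-1}(X_t)=\bigcap_n\Sigma\langle\scr{T}_n(t,I)\rangle$ combined with semicontinuity of entropy along such intersections inside a fixed SFT. If one prefers to avoid quoting a semicontinuity theorem, the clean route is the sandwich: by (\ref{tighteq}), for any $t'<t$ and $n$ large, $h(\sigma|\pi^{-1}(X_t))\leq h(\sigma|\Sigma\langle\scr{T}_n(t,I)\rangle)\leq h(\sigma|\pi^{-1}(X_{t'}))$; taking $n\to\infty$ and then using left-continuity of $t\mapsto h(\sigma|\pi^{-1}(X_t))$ (which follows from Lemma \ref{newword} and the fact that $\bigcup_{t'<t}X_{t'}$ has closure $X_t$ at a left-continuity point, or more simply from $\dim/h$ left-continuity proven later) lets $t'\uparrow t$ to pinch the limit to $h(\phi|X_t)$. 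I would present the direct decreasing-intersection argument as the primary proof and mention the sandwich as an alternative.
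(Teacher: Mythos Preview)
Your primary argument is correct, and it is genuinely different from the paper's. The paper argues measure-theoretically: it picks a measure of maximal entropy $\mu_n$ on each $\Sigma\langle\scr{T}_n(t,I)\rangle$, passes to a weak-$*$ limit point $\mu$ on $\Sigma$, observes that $\mu$ is supported on $\bigcap_n\Sigma\langle\scr{T}_n(t,I)\rangle=\pi^{-1}(X_t)$, and then uses upper semicontinuity of the map $\nu\mapsto h_\nu(\sigma)$ on subshifts to obtain $h_\mu(\sigma)\geq\limsup h_{\mu_n}(\sigma)$, which pins the limit. You instead argue topologically: you first establish the nesting $\Sigma\langle\scr{T}_{n+1}(t,I)\rangle\subseteq\Sigma\langle\scr{T}_n(t,I)\rangle$ and then invoke (with a sketch via block complexity and compactness) the standard fact that topological entropy is continuous along decreasing intersections of subshifts. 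Your route is more elementary in that it avoids the variational principle and measure theory entirely; the paper's route avoids checking the nesting and works for any sequence with the right intersection, at the cost of importing the entropy map's upper semicontinuity for expansive systems.

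One caution about your secondary ``sandwich'' route: as written it is circular. You propose to let $t'\uparrow t$ and appeal to left-continuity of $t\mapsto h(\phi|X_t)$, but in the paper that left-continuity is Corollary~\ref{leftcontinuity}, whose proof \emph{uses} the present corollary. So if you keep the sandwich as an alternative, you would need an independent reason for $h(\phi|X_{t'})\to h(\phi|X_t)$ as $t'\uparrow t$; otherwise drop that paragraph and let the decreasing-intersection argument stand on its own.
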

\begin{proof}
	Let $\mu_n$ be a measure of maximal entropy for $\Sigma\langle\scr{T}_n(t,I)\rangle$. Extended to $\Sigma$, this sequence of measures has some weak-$*$ limit point $\mu$ in the convex, compact space of invariant probability measures on $\Sigma$. The measure $\mu$ is supported on the intersection $\pi^{-1}(X_t)$, and by upper semi-continuity of entropy in subshifts we have 
\begin{align*}
 h(\sigma|\pi^{-1}(X_t))\geq h_\mu(\sigma) \geq {} & \limsup h_{\mu_n}(\sigma) \\  = {} & \limsup h(\sigma|\Sigma\langle\scr{T}_n(t,I)\rangle \geq  \liminf h(\sigma|\Sigma\langle\scr{T}_n(t,I)\rangle \geq h(\sigma|\pi^{-1}(X_t))
\end{align*}	
This implies that $\mu$ is a measure of maximal entropy for $\pi^{-1}(X_t)$, as well as the claim. 
\end{proof}

\begin{corollary}\label{leftcontinuity}
	The function $t\longmapsto \dim(X_t)$ is left-continuous at each point. 
\end{corollary}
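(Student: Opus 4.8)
The plan is to exploit the variational principle together with Proposition \ref{uppertight}, which gives, for any fixed $I\supseteq I_K$, the descending exhaustion $\pi^{-1}(X_t) = \bigcap_{n\geq 0}\Sigma\langle\scr{T}_n(t,I)\rangle$ by subshifts of finite type whose entropies are computable. The key point I would use is that on $X_t$ the dynamics $\phi$ acts with a single positive Lyapunov exponent $\log\varepsilon$ in the unstable direction and $-\log\varepsilon$ in the stable direction (in $(s,u)$ coordinates $\phi$ is the linear hyperbolic map $\mathrm{diag}(\pm\varepsilon^{-1},\varepsilon)$), so by the Ledrappier--Young / Manning-type dimension formula for such self-affine invariant sets — or more elementarily, because $\phi$ is conformal on each of the stable and unstable foliations with the \emph{same} expansion rate $\varepsilon$ — the Hausdorff dimension of $X_t$ is governed by the topological entropy: $\dim(X_t) = 2\,h(\phi|X_t)/\log\varepsilon$. (This is the standard computation for $\varepsilon$-hyperbolic toral automorphism-type systems; the two transverse directions each contribute $h/\log\varepsilon$.) Granting this, the corollary reduces to showing that $t\mapsto h(\phi|X_t)$ is left-continuous.

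First I would fix $t$ and a sequence $t_k\uparrow t$, and note $X_{t_k}\supseteq X_t$ with $\bigcap_k X_{t_k} = \bigcup_{t'<t}X_{t'}$; since each $X_{t'}$ with $t'<t$ is a closed $\phi$-invariant set strictly containing... no: I must be careful, as $X_{t'}\supseteq X_t$ and the union over $t'<t$ need not equal $X_t$. The correct statement is $\bigcap_{t'<t}X_{t'} = X_t$ directly from the definition $X_t=\{M\geq t\}$ and the fact that $M(P)\geq t'$ for all $t'<t$ iff $M(P)\geq t$. So $\{\pi^{-1}(X_{t_k})\}_k$ is a descending sequence of closed shift-invariant sets with intersection $\pi^{-1}(X_t)$. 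Upper semicontinuity of entropy on descending sequences of subshifts then gives $h(\sigma|\pi^{-1}(X_t)) \geq \limsup_k h(\sigma|\pi^{-1}(X_{t_k}))$; monotonicity ($X_t\subseteq X_{t_k}$) gives the reverse inequality $h(\sigma|\pi^{-1}(X_{t_k})) \geq h(\sigma|\pi^{-1}(X_t))$ for every $k$. Combining, $h(\sigma|\pi^{-1}(X_{t_k}))\to h(\sigma|\pi^{-1}(X_t))$, i.e. $h(\phi|X_{t_k})\to h(\phi|X_t)$ (using the entropy-preserving property of $\pi$ recorded in Section 4). Feeding this into the dimension formula yields $\dim(X_{t_k})\to\dim(X_t)$, which is left-continuity.

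The step I expect to be the main obstacle is justifying the dimension-entropy identity $\dim(X_t) = 2h(\phi|X_t)/\log\varepsilon$ cleanly — in particular whether one gets equality rather than just an inequality, and whether one needs a measure of maximal entropy realizing it (the previous corollary's proof does produce such a measure, which is convenient). The safe route, if a two-sided identity is awkward, is to prove the corollary using only \emph{upper} bounds: one always has $\dim(X_t)\leq 2h(\phi|X_t)/\log\varepsilon$ from covering $X_t$ by the level-$n$ rectangles surviving in $\Sigma\langle\scr{T}_n(t,I)\rangle$ (each of diameter $\asymp\varepsilon^{-n}$, with count growing like $e^{n\,h(\sigma|\Sigma\langle\scr{T}_n(t,I)\rangle)}$ up to subexponential factors), and a matching lower bound from a Bernoulli-type measure supported on $\pi^{-1}(X_t)$ of nearly full entropy via the mass distribution principle; upper semicontinuity of entropy handles the left limit as above. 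Either way the arithmetic of the exponent $\log\varepsilon$ is uniform in $t$, so no uniformity issue arises — the only real content is the entropy continuity, and the subtlety to watch is that right-continuity genuinely fails (as the paper notes), which is consistent here because an \emph{ascending} sequence of subshifts has only lower-semicontinuous entropy in general.
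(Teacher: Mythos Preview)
Your argument is correct, and in fact more streamlined than the paper's. Both routes take the dimension formula $\dim(X_t)=2h(\phi|X_t)/\log\varepsilon$ as given and reduce to left-continuity of $t\mapsto h(\phi|X_t)$. The paper argues by contradiction through the SFT approximations: it invokes the preceding corollary to find an $n$ with $h(\sigma|\Sigma\langle\scr{T}_n(t,I_K)\rangle)$ within $B$ of $h(\phi|X_t)$, and then Lemma~\ref{upperbound} to see that this SFT already contains $\pi^{-1}(X_{t-\eta})$ for some $\eta>0$, yielding the contradiction. You bypass the SFTs entirely, using instead the set-theoretic identity $\bigcap_{t'<t}X_{t'}=X_t$ together with upper semicontinuity of the entropy map $\mu\mapsto h_\mu(\sigma)$ on an expansive system (exactly the mechanism inside the proof of the preceding corollary, but applied directly to the $\pi^{-1}(X_{t_k})$ rather than to the trapping SFTs). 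Your route is shorter and makes the structural reason for left-continuity transparent; the paper's route has the virtue of being an immediate corollary of machinery already set up, and makes explicit that a single finite trapping collection witnesses the approximation. Your extended discussion of how to justify the dimension--entropy identity is reasonable caution, but note that the paper simply asserts this formula, so for the purpose of matching the paper you may take it as given.
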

\begin{proof}
	The dimension of a closed, invariant subset $X\subseteq \T_K$ is related to the entropy of $\phi$ on $X$ via \[\dim(X) = \frac{2h(\phi|X)}{\log(\varepsilon)},\] so it suffices to prove that $t\longmapsto h(\phi|X_t)$ is left continuous. Since this function is decreasing, left-discontinuity at $t$ would imply there exists $B>0$ such that \[h(\phi|X_{t-\eta})-h(\phi|X_{t})\geq B\ \ \mbox{for all}\ \ \eta>0\] By the previous corollary we know there exists $n\in \mathbb{N}$ with \[h(\sigma|\Sigma\langle \scr{T}_n(t,I_K)\rangle) - h(\phi|X_{t}) < B\] Now Lemma \ref{upperbound} ensures that $\Sigma\langle \scr{T}_n(t,I_K)\rangle$ contains $\pi^{-1}(X_{t-\eta})$ for some $\eta>0$, which implies \[h(\sigma|\Sigma\langle \scr{T}_n(t,I_K)\rangle)\geq h(\sigma|\pi^{-1}(X_{t-\eta})) = h(\phi|X_{t-\eta}),\] contradicting the inequalities above. 
\end{proof}

\section{Applications to the inhomogeneous spectrum}\label{sec:apps}

The plot of $\dim(X_t)$ contains a number of plateaus as illustrated in the case $K=\Q(\sqrt{5})$ above. Sometimes these are actually set-theoretic plateaus, and the following proposition demonstrates that $\pi^{-1}(X_t)$ is particularly simple in such cases. 
\begin{proposition}\label{isosft}
	Suppose that $X_t = X_{t-\eta}$ for some $\eta>0$. Then $\pi^{-1}(X_t)$ is a subshift of finite type.
\end{proposition}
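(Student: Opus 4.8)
The plan is to exhibit a finite set of rectangles $\scr{C}\subseteq\bigcup_n\scr{P}_n$ such that $\pi^{-1}(X_t) = \Sigma\langle\scr{C}\rangle$; since a subshift obtained by forbidding finitely many words is of finite type, this suffices. The natural candidate is $\scr{C} = \scr{T}_n(t,I_K)$ for a sufficiently large but fixed $n$, where $I_K$ is the finite set from Proposition \ref{uppertight}. Indeed, Proposition \ref{uppertight} applied with $t' = t-\eta$ gives, for $n$ large, the chain $\pi^{-1}(X_t)\subseteq\Sigma\langle\scr{T}_n(t,I_K)\rangle\subseteq\pi^{-1}(X_{t-\eta})$, and the hypothesis $X_t = X_{t-\eta}$ collapses the outer terms so that $\pi^{-1}(X_t) = \Sigma\langle\scr{T}_n(t,I_K)\rangle$. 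Since $\scr{T}_n(t,I_K)$ is a finite subset of $\scr{P}_n$, forbidding its coordinate words defines a subshift of finite type, and we are done.

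The key steps, in order, are: (i) recall that $I_K$ is finite and that $\scr{T}_n(t,I)$ is finite for each $n$; (ii) invoke Proposition \ref{uppertight} with $I = I_K$ and $t' = t-\eta$ to obtain an $N$ such that for all $n\geq N$ the double inclusion (\ref{tighteq}) holds; (iii) fix one such $n$ and use $X_t = X_{t-\eta}$, which gives $\pi^{-1}(X_t) = \pi^{-1}(X_{t-\eta})$, to conclude the two inclusions are equalities, so $\pi^{-1}(X_t) = \Sigma\langle\scr{T}_n(t,I_K)\rangle$; (iv) observe that $\Sigma\langle\scr{C}\rangle$ for finite $\scr{C}$ is a subshift of finite type, as noted at the start of Section \ref{sec:ub}, because it is the subshift of $\Sigma$ (itself already an SFT) avoiding a finite list of words.

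I do not expect a serious obstacle here, since the proposition is essentially a corollary of Proposition \ref{uppertight}. The one point requiring a small amount of care is the bookkeeping in step (iv): the coordinate words of elements of $\scr{P}_n$ are words of length $2n+1$ in the original alphabet $\{0,1\}$ (after passing through the identification $\Sigma\cong\Sigma_n$), so avoiding the finitely many words in $\scr{T}_n(t,I_K)$ together with the defining constraint of $\Sigma$ (avoiding $00$, or the appropriate transition rules for general $K$) is a finite set of forbidden blocks; hence the result is genuinely of finite type rather than merely sofic. It is also worth remarking explicitly that the particular $n$ and the set $I_K$ are immaterial to the conclusion — any admissible choice yields the same subshift $\pi^{-1}(X_t)$ — which is why the SFT description is canonical once one knows $X_t$ lies on a set-theoretic plateau.
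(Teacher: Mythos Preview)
Your proof is correct and follows essentially the same approach as the paper: invoke Proposition~\ref{uppertight} with $t'=t-\eta$ to sandwich $\Sigma\langle\scr{T}_n(t,I)\rangle$ between $\pi^{-1}(X_t)$ and $\pi^{-1}(X_{t-\eta})$, then use the hypothesis to collapse the chain and identify $\pi^{-1}(X_t)$ with the SFT $\Sigma\langle\scr{T}_n(t,I)\rangle$. The paper's version is terser and notes that the SFT is given by a $0$-$1$ matrix in block form $\Sigma_m$ for $m\geq n-1$, but the argument is the same.
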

\begin{proof}
	By Proposition \ref{uppertight}, we may choose $n\in\NN$ so that \[\pi^{-1}(X_t)\subseteq \Sigma\langle\scr{T}_n(t,I)\rangle\subseteq \pi^{-1}(X_{t-\eta})=\pi^{-1}(X_t)\] Thus $\pi^{-1}(X_t) = \Sigma\langle\scr{T}_n(t,I)\rangle$, which is expressible directly as an SFT via a 0-1 matrix when viewed in block form in $\Sigma_{m}$ for some $m$ (namely, any $m\geq n-1$). 
\end{proof}

Finally, we prove the main density result.
\begin{proof}[Proof of Theorem \ref{main}]
	First suppose that $t\in M(\T_K)$ is an isolated point. By the previous proposition, $\pi^{-1}(X_t)$ is a subshift of finite type, which is to say that it can be described by a 0-1 transition matrix when viewed in block form $\Sigma_m$ for some $m$. Since $t$ is isolated, we know by Lemma \ref{newword} that $\pi^{-1}(X_t)$ contains a finite word $w$ that does not occur in $\pi^{-1}(X_{>t})$. Let $s=uwv\in\Sigma$ with $M(\pi(s))=t$. Viewed in $\Sigma_m$, there is by the Pigeanhole Principle a repeated block in both $u$ and $v$. We can then truncate $u$ and $v$ and loop the segment between these books indefinitely to produce an element $s'\in \pi^{-1}(X_t)$ that contains $w$ and is eventually periodic in both directions. Then $\pi(s')$ is a $K$-point by Lemma \ref{periodK}, and $M(\pi(s'))=t$ since $s'$ contains $w$.
	
Now suppose that $t\in M(\T_K)$ is not isolated, so there is a strictly monotone sequence $(t_k)$ in $M(\T_K)$ with $t_k\to t$. Fixing $k\in\NN$, we will show that there is a $K$-point $P$ with such that $M(P)$ lies between $t$ and $t_k$, which will finish the density claim. 
First suppose that $(t_k)$ increases to $t$. Since $t_{k+1}\in M(\T_K)$, Lemma \ref{newword} ensures that there exists $s\in\pi^{-1}(X_{t_{k+1}})$ containing a word $w$ that does not occur in $\pi^{-1}(X_t)$. Now take $n$ large enough so that \[\pi^{-1}(X_{t_{k+1}})\subseteq \Sigma\langle\scr{T}_n(t_{k+1},I)\rangle\subseteq \pi^{-1}(X_{t_k})\] as in Proposition \ref{uppertight}. Since $s$ belongs to the SFT $\Sigma\langle\scr{T}_n(t_{k+1},I)\rangle$, we can modify it by looping its ends as in the previous paragraph to obtain another element $s'$ of this SFT that also contains $w$. But then we have $t_k\leq M(\pi(s'))<t$, so $P=\pi(s')$ is the desired $K$-point. 

Now suppose that $(t_k)$ is decreasing. Since $t_{k+1}\in M(\T_K)$, Lemma \ref{newword} ensures there is word $w$ occurring in $\pi^{-1}(X_{t_{k+1}})$ that does not occur in $\pi^{-1}(X_{t_{k}})$. Now take $n$ large enough so that \[\pi^{-1}(X_{t_{k+1}})\subseteq \Sigma\langle\scr{T}_n(t_{k+1},I)\rangle\subseteq \pi^{-1}(X_{t})\] and proceed as before to produce $s'\in\Sigma\langle\scr{T}_n(t_{k+1},I)\rangle$ that contains $w$ and is eventually periodic in both directions. We have $t\leq M(\pi(s'))<t_k$, and again $P=\pi(s')$ is the desired $K$-point. 
\end{proof}

\bibliography{rqdyn}
\bibliographystyle{plain}

\end{document}